\documentclass[12pt]{amsart}
\usepackage{amssymb} 

\vfuzz2pt 
\hfuzz2pt 
\newtheorem{thm}{Theorem}[section]
\newtheorem{cor}[thm]{Corollary}
\newtheorem{lemma}[thm]{Lemma}
\newtheorem{prop}[thm]{Proposition}

\newtheorem{conj}[thm]{Conjecture}
\theoremstyle{definition}
\newtheorem{defn}[thm]{Definition}
\newtheorem{rem}[thm]{Remark}

\newtheorem{que}[thm]{Question}

\numberwithin{equation}{section}



\begin{document}
\title[On coverings of groups by normalizers]
{On coverings of groups by normalizers}%
\author[ Amiri, Haji and Jafarian Amiri   ]{m. Amiri, S. Haji and S. M.  Jafarian Amiri   }
\address{ Departamento de  Matem\'{a}tica, Universidade Federal do Amazonas, Department of Mathematics, Faculty of Sciences, University of Zanjan, P.O.Box 45371-38791, Zanjan, Iran}%
\email{mohsen@ufam.edu.br}
\email{haji$ _{-} $saeid@znu.ac.ir}
\email{sm$_{-}$jafarian@znu.ac.ir}%
\email{}
\subjclass[2000]{20D60, 20D15}
\keywords{  Covering by subgroups, Normalizer, Finite $p$-group}%
\thanks{}
\thanks{}
\maketitle

\begin{abstract}
For a group $G$, a {\it normalizer covering} of $G$ is a finite set of proper normalizers of some subgroups of $G$ whose union is $G$. We study $p$-groups ($p$ a prime) without  a normalizer covering.
As an application, we determine some non-nilpotent groups having a nilpotent normalizer covering.


\end{abstract}
\maketitle
\section{\bf Introduction and main results }

 Throughout this paper $G$ is a group. Let  $\mathcal{C}=\{A_1, A_2,\cdot, A_m\}$ be a finite collection of proper subgroups of $G$. The set $\mathcal{C}$ is called a {\it cover} or {\it covering} of $G$ if  $G=\cup_{i=1}^mA_i$. 
 
 
 
In 1994, Cohn \cite{cohn} defined $\sigma(G)$, called covering number of $G$, the minimal size of a covering of $G$ and the cover of size $\sigma(G)$ is said to be a {\it minimal cover}. The parameter $\sigma(G)$ has been extensively studied (see \cite{int} and the references therein). For covering of other algebraic structures, see \cite{jour}.



Denote by $\sigma_a(G)$ the minimal size of a covering consisting of abelian subgroups of $G$ and such covering is called a {\it minimal abelian covering} of $G$. This parameter was investigated by some authors (see \cite{mas}, \cite{bro}, \cite{pyb} and \cite{pod}).

 
 Recently the second and third  authors of this article in \cite{haji} introduced $\sigma_c(G)$ the minimal size of a covering consisting of proper centralizers of $G$. Also Atanasov et al. \cite{power} study $\sigma_P(G)$ for Dihedral 2-groups where $\sigma_P(G)$ is the minimal size of a covering consisting of powerful subgroups.

 In this article, we focus on groups having a {\it normalizer covering } which is a  covering, say $\Gamma$, whose members are normalizers of some subgroups i.e. $$\Gamma=\{N_G(H_1),\cdots, N_G(H_{m})\}$$ where $N_G(H_i)=\{g\in G:~g^{-1}H_ig=H_i\}$ for some subgroups $H_i$ of $G$.

We define $\sigma_{\mathfrak{n}}(G)$ the minimal size of a normalizer covering of $G$ and call it the {\it normalizer covering number}. If $G$ is a Dedekind group (all its subgroups are normal), then $G$ does not have a normalizer covering. Note that if $G$ does not have any normalizer covering, then we define $\sigma_{ \mathfrak{n}}(G)=\infty$,  for example $\sigma_{\mathfrak{n}}(D_{2n})=\infty$ where $D_{2n}$ is the dihedral group of order $2n$.

\begin{rem} 
It suffices to restrict our attention to finite groups when determining {\it normalizer covering} numbers, since, by a
result of B. H. Neumann \cite{nu}, a group admits a cover if and only if it has a finite noncyclic homomorphic image. So in the sequel $G$ is finite and $p$ is a prime divisor of $|G|$.

\end{rem}
Note that $\sigma_{\mathfrak{n}}(G)=\sigma(G)$ for every non-abelian simple group $G$. Cohn showed that if $G$ is a non-cyclic $p$-group, then $\sigma(G)=p+1$ (see Theorem 2 of \cite{cohn}). In this article, we prove that if $G$ is a $p$-group without any normalizer covering then either  $G$ is a metacylic group for $p>2$, or $p=2$ and $G$ is a product of two metcyclic subgroup. There exists a $2$-group $G=AllSmallGroups(2^5,IsAbelian,false)[44]$  in the software $GAP$ \cite{gap} such that $\sigma_{ \mathfrak{n}}(G)=5$.  Tomkinson \cite{tom} have showed that there is no group $G$ with $\sigma(G)=7$, whereas it is easy to see that $\sigma_{ \mathfrak{n}}(S_4)=7$ where $S_4$ is the symmetric group on four letters. The next integer which is not a covering number is 11 (see \cite{11}) and  the interested reader can see \cite{int} for integers less than 129 which are not covering numbers. However we can not find any group $G$ with $\sigma_{ \mathfrak{n}}(G)=11$.


 We suggest that it might be of interest to investigate which positive integers$>2$ can or can not be as  {\it normalizer covering} numbers of groups. For any prime $p$ and positive integer $k$, we construct a group $G$ with $\sigma_{ \mathfrak{n}}(G)=p^{k}+1$.
 Finelly, we pose the  following  conjecture:

 \begin{conj}
     Let $G$ be finite $p$-group with a normalizer covering:

     (a) If $p>2$, then $\sigma_{ \mathfrak{n}}(G)=p+1$.

     (b) If $p=2$, then $\sigma_{ \mathfrak{n}}(G)\in \{3,5\}.$
 \end{conj}


 Throughout this paper all notations are usual. For example, $G'$ and $Z(G)$ denote the commutator subgroup and the center of a group $G$, respectively, $C_n$ stands for the cyclic group of order $n$. Moreover $\varPhi(G)$ denotes the Frattini subgroup of $G$. We denote the Fitting subgroup of $G$ by $Fit(G)$. The term {\it normalizer} (except in normalizer covering) always means the normalizer of some subgroup of the group. The rest of the notation is standard and can be found mainly in \cite{Rob}. 

\section{Preliminaries results}
We begin with two simple lemmas but frequently used fact.
\begin{lemma}\label{nocover}
Let $G$ be a group. Then the following statements are equivalent:

(i)$\sigma_{ \mathfrak{n}} (G)=\infty$.

(ii)there exists $x\in G$ whenever $x\in N_G(H)$ for some subgroup $H$ of $G$, then $H\unlhd G$.
\end{lemma}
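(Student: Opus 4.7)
The equivalence is essentially a translation of ``$G$ admits no normalizer covering'' into a condition on a single element, so I would argue both implications by unraveling definitions and using the finiteness of $G$.

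For (i)$\Rightarrow$(ii), the key observation is that a finite group has only finitely many subgroups, hence only finitely many distinct normalizers $N_G(H)$. Therefore, if every element of $G$ were contained in some proper normalizer, then the finite collection of all proper normalizers of $G$ would itself be a normalizer covering, contradicting $\sigma_n(G)=\infty$. Hence there must exist $x\in G$ lying in no proper normalizer. For any subgroup $H\leq G$ with $x\in N_G(H)$, the normalizer $N_G(H)$ cannot be proper, so $N_G(H)=G$, which is precisely the statement $H\unlhd G$.

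For (ii)$\Rightarrow$(i), I argue by contradiction. Let $x\in G$ be the element supplied by (ii), and suppose $\Gamma=\{N_G(H_1),\ldots,N_G(H_k)\}$ were a normalizer covering of $G$. By definition each member of $\Gamma$ is a proper subgroup, so no $H_i$ is normal in $G$. On the other hand, $x$ must lie in some $N_G(H_j)$, and then the hypothesis in (ii) forces $H_j\unlhd G$, contradicting the properness of $N_G(H_j)$. Therefore no finite normalizer covering can exist, i.e., $\sigma_n(G)=\infty$.

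Since the argument consists entirely of reorganizing the definitions and invoking finiteness of the subgroup lattice of $G$, there is no genuine obstacle; the whole content of the lemma lies in recording this reformulation for later use.
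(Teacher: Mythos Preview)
Your argument is correct and complete. The paper does not actually supply a proof of this lemma; it is stated as one of ``two simple lemmas'' and left to the reader, so your unpacking of the definitions (together with the finiteness of the subgroup lattice for the (i)$\Rightarrow$(ii) direction) is exactly the routine verification the authors had in mind.
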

\begin{proof}
The proof is clear.
\end{proof}

\begin{lemma}\label{norm}
Let $ G $ be a group and $ N\unlhd G $. If $ \sigma_{ \mathfrak{n}}(\frac{G}{N})<\infty$ , then $ \sigma_{ \mathfrak{n}}(G)\leq \sigma_{ \mathfrak{n}}(G/N) $.
\end{lemma}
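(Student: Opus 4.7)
The plan is to lift a minimum normalizer covering of $G/N$ back to a normalizer covering of $G$ of the same cardinality using the correspondence theorem.

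Assume $\sigma_n(G/N)=k<\infty$ and fix a minimum normalizer covering
$$G/N \;=\; \bigcup_{i=1}^{k} N_{G/N}(\overline{H}_i),$$
where each $\overline{H}_i$ is a subgroup of $G/N$ whose normalizer in $G/N$ is a proper subgroup. By the correspondence theorem each $\overline{H}_i$ has the form $H_i/N$ for a unique subgroup $H_i$ of $G$ with $N\leq H_i\leq G$. I would then invoke the standard identity
$$N_{G/N}(H_i/N)\;=\;N_G(H_i)/N,$$
which follows directly from unwinding the definitions: $gN$ normalizes $H_i/N$ exactly when $g^{-1}H_ig\cdot N = H_i$, and since $N\leq H_i$ this is equivalent to $g\in N_G(H_i)$.

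Next I would use this to produce the covering of $G$. For any $g\in G$, its image $gN$ lies in some $N_{G/N}(H_i/N)=N_G(H_i)/N$, hence $g\in N_G(H_i)$. Therefore
$$G \;=\; \bigcup_{i=1}^{k} N_G(H_i).$$
Finally, each $N_G(H_i)$ must be a proper subgroup of $G$: if $N_G(H_i)=G$, then $N_G(H_i)/N = G/N$ would equal $N_{G/N}(H_i/N)$, contradicting the properness of that member of the covering of $G/N$. Consequently $\{N_G(H_i)\}_{i=1}^{k}$ is a normalizer covering of $G$ of size $k$, giving $\sigma_n(G)\leq k=\sigma_n(G/N)$.

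The argument is essentially a pull-back along the quotient map, and there is no real obstacle; the only point that requires care is the normalizer-of-quotient identity $N_{G/N}(H/N)=N_G(H)/N$, which is standard and used implicitly to ensure both the covering property and the properness of the lifted normalizers. No use is made of finiteness beyond the fact that $\sigma_n(G/N)<\infty$, which is exactly the hypothesis.
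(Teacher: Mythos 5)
Your proposal is correct and follows essentially the same route as the paper: lift the minimal covering of $G/N$ via the identity $N_{G/N}(H_i/N)=N_G(H_i)/N$ to obtain a normalizer covering of $G$ of the same size. The paper's proof is just a terser version of your argument (it leaves the properness check implicit), so there is nothing to add.
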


\begin{proof}
Let  $ \sigma_{ \mathfrak{n}}(G/N)=m $ and $ G/N=\bigcup_{i=1}^m N_{G/N}(H_i/N) $. Since $ N_{G/N}(H_i/N)=\dfrac{N_G(H_i)}{N} $, we have $ G=\bigcup_{i=1}^m N_G(H_i) $. By the definition of {\it normalizer covering} number of $G$, we have $ \sigma_{ \mathfrak{n}}(G)\leq m $.
\end{proof}


If $H$ and $K$ are groups and $H$ has a normalizer covering, then $H\times K$ has a {\it normalizer covering} since $N_{H\times K}(H_i\times K_i)=N_H(H_i)\times N_K(K_i)$ for every $H_i\leq H$ and $K_i\leq K$. We notice that the converse is not true, for example consider $D_8\times D_8$. But we have the following result.

\begin{prop}\label{direct}
Let $ G=H\times K $. If $\sigma_n(G)<\infty$, then  $\sigma_n(G)\leq min\{\sigma_n(H), \sigma_n(K)\}$.
In addition, if $gcd(|H|,|K|)=1$, then $\sigma_n(G)= min\{\sigma_n(H), \sigma_n(K)\}$.
\end{prop}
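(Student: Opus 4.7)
The proof rests on the direct-product identity
\[
N_{G}(A\times B)=N_{H}(A)\times N_{K}(B)\qquad(A\le H,\ B\le K),
\]
which is immediate from $(g_{1},g_{2})^{-1}(A\times B)(g_{1},g_{2})=A^{g_{1}}\times B^{g_{2}}$. For the coprime case I also need the standard splitting: when $\gcd(|H|,|K|)=1$, every $L\le G$ decomposes as $L=(L\cap H)\times(L\cap K)$. Indeed, writing $1=a|H|+b|K|$, for any $(h,k)\in L$ one computes $(h,k)^{b|K|}=(h,1)\in L$ and $(h,k)^{a|H|}=(1,k)\in L$, so $L$ contains its two projections.

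For the first inequality, suppose $\sigma_{n}(H)=m<\infty$ and fix a proper normalizer covering $H=\bigcup_{i=1}^{m}N_{H}(H_{i})$. Setting $\widetilde{H}_{i}=H_{i}\times\{1\}$, the identity yields $N_{G}(\widetilde{H}_{i})=N_{H}(H_{i})\times K\subsetneq G$, and these $m$ normalizers cover $G$; hence $\sigma_{n}(G)\le\sigma_{n}(H)$, and by symmetry $\sigma_{n}(G)\le\sigma_{n}(K)$. (If both $\sigma_{n}(H)$ and $\sigma_{n}(K)$ are infinite, the claimed bound is vacuous.)

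For the reverse inequality under $\gcd(|H|,|K|)=1$: if $\sigma_{n}(G)=\infty$ the contrapositive of the previous step already forces $\sigma_{n}(H)=\sigma_{n}(K)=\infty$, so assume $\sigma_{n}(G)=n<\infty$ and fix a minimum covering $G=\bigcup_{i=1}^{n}N_{G}(L_{i})$. By the splitting, $L_{i}=(L_{i}\cap H)\times(L_{i}\cap K)$, so $N_{G}(L_{i})=A_{i}\times B_{i}$ with $A_{i}:=N_{H}(L_{i}\cap H)$ and $B_{i}:=N_{K}(L_{i}\cap K)$. Properness of $N_{G}(L_{i})$ forbids $A_{i}=H$ and $B_{i}=K$ simultaneously. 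Setting $I=\{i:A_{i}\subsetneq H\}$ and $J=\{j:B_{j}\subsetneq K\}$, I therefore have $I\cup J=\{1,\dots,n\}$.

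The crux---and the main obstacle---is the dichotomy that either $H=\bigcup_{i\in I}A_{i}$ or $K=\bigcup_{j\in J}B_{j}$. If both failed, I could pick $h_{0}\in H\setminus\bigcup_{i\in I}A_{i}$ and $k_{0}\in K\setminus\bigcup_{j\in J}B_{j}$; the pair $(h_{0},k_{0})$ must lie in some $A_{i_{0}}\times B_{i_{0}}$, whence $i_{0}\notin I\cup J$, giving $A_{i_{0}}=H$ and $B_{i_{0}}=K$ in contradiction with properness. The first alternative produces a proper normalizer covering of $H$ of size $|I|\le n$, so $\sigma_{n}(H)\le n$; the second yields $\sigma_{n}(K)\le n$. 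Combined with the first inequality this delivers $\sigma_{n}(G)=\min\{\sigma_{n}(H),\sigma_{n}(K)\}$. The coprimality is used essentially only once, to secure the splitting $L_{i}=(L_{i}\cap H)\times(L_{i}\cap K)$, without which $N_{G}(L_{i})$ need not decompose as a product of subgroups of $H$ and $K$; the real care in the argument is just tracking which indices are retained when passing from the cover of $G$ to covers of the factors, to ensure that only \emph{proper} normalizers appear.
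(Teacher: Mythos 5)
Your proof is correct, and it follows the paper's overall strategy --- the first inequality by covering $G$ with $N_G(H_i\times\{1\})=N_H(H_i)\times K$ (the paper gets the same bound via its quotient Lemma~2.2, which is the same content), and the coprime case by the splitting $L=(L\cap H)\times(L\cap K)$ and the identity $N_G(A\times B)=N_H(A)\times N_K(B)$, exactly as in the paper. Where you genuinely diverge is the final step: the paper first replaces each member of a minimal covering by a \emph{maximal} normalizer, so that every member has the shape $N_H(H_i)\times K$ or $H\times N_K(K_i)$, and then invokes ``Lemma 4 of Cohn'' to conclude that all members are of one type, whereas you prove only the weaker dichotomy that either the proper $H$-components $\{A_i:i\in I\}$ cover $H$ or the proper $K$-components $\{B_j:j\in J\}$ cover $K$, by picking $h_0,k_0$ outside the respective unions and deriving a contradiction with properness of some $A_{i_0}\times B_{i_0}$. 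This buys you a completely self-contained, elementary argument: you need no reduction to maximal normalizers and no external lemma, and the weaker dichotomy still gives $\sigma_n(H)\le|I|\le\sigma_n(G)$ or $\sigma_n(K)\le|J|\le\sigma_n(G)$, which is all that is required (indeed, combined with minimality it even recovers the paper's ``all of one type'' claim). Your explicit treatment of the degenerate cases (both invariants infinite, and $\sigma_n(G)=\infty$ in the coprime case) is also cleaner than the paper's brief remark. The only caveat is cosmetic: when you extract the covering of $H$ indexed by $I$, it is worth noting (as you implicitly do) that each $A_i=N_H(L_i\cap H)$ with $i\in I$ is a proper normalizer of a subgroup of $H$, so the extracted family is a legitimate normalizer covering in the sense of the definition.
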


\begin{proof}
The proof is similar to the proof of Lemma 4 of \cite{cohn}.


\end{proof}
In the following, we give a necessary and sufficient condition for a group to have a normalizer covering.
\begin{prop}\label{fit}
Let $G$ be a  finite  group. Then $G$ has a {\it normalizer covering} if and only if the Fitting subgroup of $G$ is a subset of the union of some normalizers of non-normal subgroups in $G$, i.e. $Fit(G)\subseteq \bigcup_{i=1}^m N_G(H_i)$ where $H_i\ntriangleleft G$ for each $i$.

\end{prop}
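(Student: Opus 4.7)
The forward direction is immediate: if $\{N_G(H_1),\ldots,N_G(H_k)\}$ is a normalizer covering of $G$, each $H_i$ must be non-normal (otherwise $N_G(H_i)=G$ would fail to be proper), and trivially $Fit(G)\subseteq G=\bigcup_{i=1}^{k}N_G(H_i)$. So all the work is in the converse.

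For the converse, assume $Fit(G)\subseteq\bigcup_{i=1}^{m}N_G(H_i)$ with each $H_i$ non-normal. By Lemma \ref{nocover} (read in its contrapositive form), $G$ admits a normalizer covering precisely when every element of $G$ lies in some $N_G(K)$ with $K\not\trianglelefteq G$; since $G$ is finite (and so has only finitely many subgroups), producing such a $K$ for each $x$ automatically yields a finite covering. The hypothesis already places each $x\in Fit(G)$ into some $N_G(H_i)$, so the task reduces to finding, for each $x\in G\setminus Fit(G)$, a non-normal subgroup that $x$ normalizes.

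The natural candidate is $\langle x\rangle$ itself, which is of course normalized by $x$. The key observation is that for $x\notin Fit(G)$ this cyclic subgroup is automatically non-normal in $G$. Indeed, if $\langle x\rangle\trianglelefteq G$, then $\langle x\rangle$ is a normal, abelian (hence nilpotent) subgroup of $G$ and therefore contained in $Fit(G)$ by the defining property of the Fitting subgroup, forcing $x\in Fit(G)$, a contradiction. Hence $N_G(\langle x\rangle)$ is a proper subgroup of $G$ and is a proper normalizer containing $x$.

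Combining the two cases yields $G\subseteq\bigcup_{i=1}^{m}N_G(H_i)\cup\bigcup_{x\in G\setminus Fit(G)}N_G(\langle x\rangle)$, a finite union of proper normalizers, so $\sigma_n(G)<\infty$. No real obstacle arises here: the whole argument hinges on the elementary fact that a normal cyclic subgroup of a finite group lies in its Fitting subgroup, which reduces the problem from covering all of $G$ to the a priori weaker hypothesis of covering only $Fit(G)$.
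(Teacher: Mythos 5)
Your proof is correct and follows essentially the same route as the paper: for $x\notin Fit(G)$ the cyclic subgroup $\langle x\rangle$ cannot be normal (a normal cyclic subgroup is a normal nilpotent subgroup, hence lies in $Fit(G)$), so $N_G(\langle x\rangle)$ is a proper normalizer containing $x$, and adjoining these to the given normalizers covering $Fit(G)$ yields a normalizer covering of $G$. Your write-up is in fact slightly more explicit than the paper's, which leaves the final substitution of $Fit(G)$ by $\bigcup_i N_G(H_i)$ implicit.
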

\begin{proof}
  {If $G$ has a normalizer covering, then clearly, $Fit(G)$ is a subset of the union of that covering. 
  	
  	Now suppose that $Fit(G)$ is a subset of a union of proper normalizers in $G$.  If $b\in G\setminus Fit(G)$, then $N_G(\langle b\rangle)\neq G$, and so $G=Fit(G)\cup \bigcup_{b\in G\setminus Fit(G)} N_G(\langle b\rangle).$ This completes the proof.
  }
\end{proof}

\begin{cor}
Let $G$ be a  finite  solvable group, and let $F=Fit(G)$ be a cyclic Hall  subgroup of $G$. Then $G$ does not have any normalizer covering.
\end{cor}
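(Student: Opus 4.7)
The plan is to verify the criterion of Lemma \ref{nocover}: I exhibit a single element $x\in G$---namely, a generator of the cyclic group $F$---such that $x\in N_G(H)$ forces $H\unlhd G$.

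I first gather the structural ingredients. Because $G$ is solvable, $C_G(F)\le F$ (a standard property of the Fitting subgroup of a solvable group), so $G/F$ embeds into $\mathrm{Aut}(F)$. Since $F$ is cyclic, $\mathrm{Aut}(F)$ is abelian, and hence so is $G/F$. Because $F$ is a normal Hall subgroup, Schur--Zassenhaus furnishes a complement $K\cong G/F$ with $G=F\rtimes K$, and $K$ is abelian.

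Fix a generator $x$ of $F$ and a subgroup $H\le G$ with $x\in N_G(H)$. Put $H_0:=H\cap F$; since $F$ is cyclic, $H_0$ is characteristic in $F$, hence $H_0\unlhd G$. A second application of Schur--Zassenhaus inside $H$ writes $H=H_0\rtimes H_1$ for a Hall $\pi'$-complement $H_1$, where $\pi$ is the set of primes dividing $|F|$. By Hall's theorem in the solvable group $G$, the $\pi'$-subgroup $H_1$ lies inside a $G$-conjugate of $K$, so there exists $g\in G$ with $H_2:=g^{-1}H_1g\le K$; using $H_0\unlhd G$, this gives $H=g(H_0H_2)g^{-1}$.

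Set $H':=H_0H_2$ and $y:=g^{-1}xg$. Since $F\unlhd G$ and $x$ generates $F$, $y$ is also a generator of $F$, and the hypothesis $xHx^{-1}=H$ transports to $y\in N_G(H')$. Thus $F=\langle y\rangle\le N_G(H')$. Moreover $K$ normalizes $H_0$ (normal in $G$) and normalizes $H_2$ (every subgroup of the abelian group $K$ is normal), so $K$ normalizes $H_0H_2=H'$. Combining, $G=FK\le N_G(H')$, so $H'\unlhd G$; but then $H=gH'g^{-1}=H'$ is itself normal in $G$, and Lemma \ref{nocover} gives $\sigma_n(G)=\infty$. The only step requiring real care is the Schur--Zassenhaus/Hall-conjugacy reduction from $H$ to the more tractable $H'$, together with checking that $y=g^{-1}xg$ is still a generator of $F$ so that the normalizer hypothesis carries over; after that, the conclusion is forced by the abelianness of $K$ and the normality of $H_0$.
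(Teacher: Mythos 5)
Your proof is correct and follows essentially the same route as the paper's: solvability gives $C_G(F)=F$, hence an abelian complement $K$ with $G=F\rtimes K$, a Schur--Zassenhaus/Hall argument puts the $\pi'$-part of $H$ inside a conjugate of $K$, and the normality of $H\cap F$ (characteristic in the cyclic $F$) together with the abelianness of $K$ forces $H\unlhd G$. The only cosmetic differences are that you verify the criterion of Lemma \ref{nocover} directly for a generator of $F$ instead of arguing by contradiction through Proposition \ref{fit}, and that you make explicit the conjugation bookkeeping (passing to $H'=g^{-1}Hg$ and $y=g^{-1}xg$) that the paper leaves implicit.
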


\begin{proof}
Suppose, for a contradiction, that $G$ has a normalizer covering.  By Proposition \ref{fit}, there are proper normalizers $N_G(H_i)$ for $1\leq i\leq k$ such that $F\subseteq \bigcup_{i=1}^kN_G(H_i)$. By hypothesis, $F=\langle x\rangle$ for some $x\in G$. Without loss of generality, we  may assume that $x\in N_G(H_1)$.

By Normalizer-Centralizer Theorem, there exists a monomorphim $:\frac{G}{C_G(F)}\hookrightarrow Aut(F)$. Since $G$ is solvable, $C_G(F)=F$ and since $F$ is cyclic, we have $\frac{G}{F}$ is abelian. By hypothesis, there exists a subgroup $K$ of $G$ such that $G=\langle x\rangle \rtimes K$ and $gcd(o(x),|K|)=1$. It yields that $H_1=\langle x^m\rangle\rtimes V$ where $V\leq K^g$ for some $g\in G$ and some integer $m$. Clealy, $\langle x^m\rangle\lhd G$ which implies that $K^g\leq  N_G(\langle x^m\rangle)$. Since $K^g$ is an abelian group,   $K^g\leq N_G(V)$. Hence
$K^g\leq N_G(H_1)$. It follows that $H_1\lhd G$, which  is a contradiction.

\end{proof}
In the following, we obtain some information on non-nilpotent groups having no normalizer covering. In the next section we study the structure of a nilpotent group $G$ without any normalizer covering.

\begin{cor}\label{ded}
Let $G$ be a  finite  group. If $G$ does not have a {\it normalizer covering}, then $\frac{G}{Fit(G)}$ is a Dedekind group.
\end{cor}
\begin{proof} Let $F:=Fit(G)$ and $x\in G\setminus F$. If $N_{\frac{G}{F}}(\langle xF\rangle )\neq \frac{G}{F}$, then $F\leq N_G(\langle x\rangle F)$, so from Proposition \ref{fit}, $G$ has normalizer covering, which is a contradiction. Therefore all cyclic subgroups of $\frac{G}{F}$ are normal, as wanted.

\end{proof}
We end this section with the following result.

\begin{thm}
For every prime $ p$ and positive integer $k$, there exists a group $ G $ with $ \sigma_{ \mathfrak{n}}(G)=p^k+1$.
\end{thm}
\begin{proof}
First, note that if $p>2$ and $G$ is a group of order $p^3$ with exponent $p$, then $\sigma_{ \mathfrak{n}}(G)=p+1$. Also, there is a group $G$ of order 16 such that $\sigma_{ \mathfrak{n}}(G)=3$. So we may assume that $k>1$.

We construct a group $G$ of order $p^k(p^k-1)$ such that $G$ has a minimal normal subgroup of order $p^k$ and a cyclic (maximal) subgroup of order $p^k-1$. 

Consider $G=H\rtimes_{\varphi} K$ where $H$ is the additive group of the field $F$ of order $p^k$ and $K=\langle \alpha \rangle$ is the  multiplicative group of $F$ via the homomorphism $\varphi: K\rightarrow Aut(H)$ by $\varphi(\alpha)(h)=\alpha h$ for every $h\in H$. It is easy to see that $H$ is a minimal normal group of $G$ since $H$ is the only nonzero $\varphi(\alpha)$-invariant subgroup of $H$. It yields that $K$ is a maximal subgroup of $G$. Since $K$ is not normal in $G$, we have $N_G(K)=K$ and so the number of conjugates of $K$ in $G$ is $p^k$.
Also $K$ and its conjugates must be in every covering of $G$. Since $H$ is a minimal normal (abelian) subgroup of $G$ of order greater than $p$, we see that $H\subseteq N_G(T)\subset G$ for every subgroup $T$ of $H$ of order $p$. By Lemma 17 of \cite{cohn}, $G$ is covered by $p^k$ conjugates of $K$ and $N_G(T)$ which implies that $\sigma_{ \mathfrak{n}}(G)=p^k+1$. This completes the proof.

\end{proof}

\section{ normalizer coverings of nilpotent groups}
We know that $Fit(G)=G$ for every finite nilpotent group $G$. So Proposition \ref{fit} does not give any structural information about nilpotent groups having a normalizer covering. In this section, we give a sufficient condition for a nilpotent group to have a normalizer covering. At first, we investigate the normalizer coverings in $p$-groups which are the special case of the nilpotent groups.

It is well known that if $G$ is a $p$-group, then $\varPhi(G)=G^pG'$ where $G^p=\langle g^p:~~g\in G\rangle$.
Let $p>2$. A $p$-group $G$ is said to be powerful if $G^p=\varPhi(G)$, or equivalently $G'\leq G^p$. Recall that a Dedekind group of odd order is abelian by 5.3.7 of \cite{Rob}. We use of the above argument in the proof of the following lemma.
\begin{lemma}\label{gen}
Let $G$ be a   finite $p-$group with $p>2$ such that $Z(G)\leq \Phi(G)$. If $\sigma_{ \mathfrak{n}}(G)=\infty$, then   $|G:\Phi(G)|=p^2$ or equivalently $G$ is a 2-generator group. In addition, there exist $x,y\in G$ such that $G=\langle x\rangle \langle y\rangle$ where $\langle x\rangle \lhd G$ and  $\langle h\rangle\lhd G$ for any $h\in C_G(x)$.

\end{lemma}
\begin{proof}
  {   We proceed  by induction on $|G|$. The base step is clear whenever $|G|=p^3$. So suppose that $|G|>p^3$. Since $\sigma_{ \mathfrak{n}}(G)=\infty$, by Lemma \ref{nocover} there exists $x\in G$ with the following property:

  \begin{center}
  if $x\in N_G(H)$ for some subgroup $H$ of $G$, then $H\unlhd G$.~~~(*)
  \end{center}

The proof is done in four steps:

{\bf Step 1}: We claim that $\langle x\rangle\lhd G$  and $x\notin Z(G)$.

Since $x\in N_G(\langle x \rangle)$, by (*) we have $\langle x \rangle\lhd G$ . By hypothesis, $G$ is non-abelian and since $|G|$ is odd, $G$ is not Dedekind. So $G$ has a non-normal subgroup $T$. By (*), we have $x\notin N_G(T)$ and so $x\notin Z(G)$.

{\bf Step 2}: Now, we claim that $G$ is a powerful group.

  Let $\Delta$ be  a minimal set of generators of $G$. By Step 1, there exits $u\in \Delta$ such that $[u,x]\neq 1$. Set $T_u=\langle x\rangle\langle u\rangle$. Since $\langle x\rangle\lhd T_u$ and $\frac{T_u}{\langle x\rangle}$ is cyclic, we have $[x,u]\in T_u'\leq \langle x\rangle$.  If $\langle x\rangle = T_u'$, then $\langle x\rangle\leq \varPhi(T_u)$  and hence $T_u$ is cyclic, which is a contradiction. It follows that $T_u'$ is a proper subgroup of $\langle x\rangle$,  and so   $[x,u]\in \langle x^p\rangle$ (note that $\langle x^p\rangle$ is the unique maximal subgroup of $\langle x\rangle$). Therefore  $x\in N_G(\langle x^p\rangle\langle u\rangle)$ which implies that   $\langle x^p\rangle\langle u\rangle\lhd G$ by (*).
Hence every cyclic subgroup of $\frac{G}{\langle x^p\rangle}$ is normal and so $G'\leq \langle x^p\rangle \leq G^p$. Thus $G$ is a powerful group.

{\bf Step 3}: If $x\notin  \varPhi(G)$, then we may assume that $x\in \Delta$. But if $x\in \varPhi(G)$, then we find $g\in G\setminus \varPhi(G)$ such that $x=g^{p^m}$ for some positive integer $m$  and so $g$ does not belong to any proper normalizer of some subgroup of $G$ by $(*)$. Therefore we can replace $x$ with $g$ in $(*)$ and without loss of generality, we assume that $x\in \Delta$ in the sequel. Now we want to find such $g\in G$. 

By the last line of Step 2, we know that $\frac{G}{\langle x^p\rangle}$ is abelian. It follows that $x\langle x^p\rangle\in \frac{G^{p^t}\langle x^p\rangle}{\langle x^p\rangle}\setminus \frac{G^{p^{t+1}}\langle x^p\rangle}{\langle x^p\rangle}$ for some integer $t> 1$. Since 
$$\frac{G^{p^t}\langle x^p\rangle}{\langle x^p\rangle}=\{(x_1x_2...x_e)^{p^t}\langle x^p\rangle:x_1,...,x_k\in \Delta\},$$ there exist $x_1,...,x_c\in \Delta$ such that
 $x\langle x^p\rangle=(x_1x_2...x_c)^{p^t}\langle x^p\rangle$.
 Then $x=(x_1x_2...x_c)^{p^t}x^{sp}$ for some integer $s$. Hence $x^{1-sp}= (x_1x_2...x_c)^{p^t}$.  Since $gcd(p,1-sp)=1$, we have $gcd(o(x),1-sp)=1$ which follows that $\langle x^{1-sp}\rangle=\langle x\rangle$. Thus there exists an integer $r$ such that $p\nmid r$ and 
  $x=(x^{1-sp})^r=(x_1x_2...x_c)^{rp^t}$. If $x_1x_2...x_c=h\in \varPhi(G)$, then $x_1=(x_2...x_c)^{-1}h$, and so $\Delta$ is not a minimal set of generators, which is a contradiction. Hence $g:=(x_1x_2...x_c)^r\notin \varPhi(G)$ and so $x=g^{p^t}$, as wanted. 

{\bf Step 4}: Finally we show that $|\Delta|=2$ and this completes the proof.
   By Step 1, there exists $y\in \Delta$ such that $[x,y]\neq 1$. By $(*)$, we have $H\lhd G$ for every subgroup $H$ of $C_G(x)$. Therefore  $C_G(x)$ is a Dedekind group, and then it is aeblian.   
  Since $x$ is in a generator set of $C_G(x)$, there exists a subgroup $K$ of $C_G(x)$ such that $C_G(x)=K\times \langle x\rangle$. Since $p>2$,   $Aut(\langle x\rangle)$ is a cyclic group, and so
  by $N/C$ Theorem there exists $z\in G\setminus C_G(x)$ such that $G=C_G(x)\langle z\rangle=K\langle x\rangle \langle z\rangle$.
  Let $h\in K$. Since $\langle h\rangle\lhd G$ and $\frac{G}{\langle x\rangle}$ is an abelian group,  $[h,z]\leq \langle h\rangle$, and $[h,z]\in \langle x\rangle$.
  Consequently, $[h,z]\in \langle h\rangle\cap \langle x\rangle\leq K\cap \langle x\rangle=1.$
  Hence $K\leq C_G(z)$, and so 
  $K\leq Z(G)$. Therefore $G=K\langle x\rangle \langle z\rangle=\langle x\rangle \langle z\rangle$, because  $K\leq Z(G)\leq\Phi(G)$.
}
  \end{proof}

\begin{cor} \label{suf}
	Let $G$ be a   finite $p-$group with $p>2$   such that $Z(G)\leq \Phi(G)$. Then  $\sigma_{ \mathfrak{n}}(G)=\infty$ if and only if there exist $x,y\in G$ such that $G=\langle x\rangle \langle y\rangle$ where $\langle h\rangle\lhd G$ for any $h\in C_G(x)$.

\end{cor}
\begin{proof}
{First suppose that  there exist $x,y\in G$ such that $G=\langle x\rangle \langle y\rangle$ where $\langle h\rangle\lhd G$ for any $h\in C_G(x)$.  We claim that for any subgroup $H$ of $G$ if $x\in N_G(H)$, then $H\lhd G$. We proceed by induction on $|G|$. The base of induction is trivial. Let $H\leq G$ such that $x\in N_G(H)$. We may assume that $H\nleq \langle x\rangle$. Let $N:=Z(G)\cap H$. 
Suppose that there exists $gN\in Z(\frac{G}{N})\setminus \Phi(\frac{G}{N})$. Then  $\frac{G/N}{Z(G/N)}$ is a cyclic group and so
$Z(\frac{G}{N})=\frac{G}{N}$. Hence $H\lhd G$. So suppose that $Z(\frac{G}{N})\leq \Phi(\frac{G}{N})$.
If $N\neq 1$, then by induction hypothesis, $\frac{H}{N}\lhd \frac{G}{N}$, and so $H\lhd G$.
  So suppose that $N=1$. It follows that   $H\cap \langle x\rangle=1$, and then  $$H\cong \frac{H}{H\cap \langle x\rangle}\cong \frac{H\langle x\rangle}{\langle x\rangle}\leq \frac{G}{\langle x\rangle}$$
is a cyclic group.
Therefore $H=\langle x^iy^j\rangle$ for some integers $i$ and $j$. We have $[x^iy^j,x]=[y^j,x]\in H\cap \langle x\rangle=1$.
Therefore $y^j\in C_G(x)$, and so $H\leq C_G(x)$. By our assumption $H\lhd G$, as claimed.  From Lemma \ref{nocover}, $\sigma_n(G)=\infty$. 
 The proof of the other side  is clear by Lemma \ref{gen}.
}

\end{proof}
Note that the assumption $\langle h\rangle\lhd G$ for any $h\in C_G(x)$ in the previous corollary is necessary, because 
there exists a 3-group \break $G=AllSmallGroups(3^5,IsAbelian,false)[19]$  in the software $GAP$, such that $\sigma_{ \mathfrak{n}}(G)=4$.
The following result is an immediate consequence of Lemma \ref{gen}. 

\begin{cor}\label{sufficient}
	Let $G$ be a   finite $p-$group with $p>2$   such that $Z(G)\leq \Phi(G)$. If $|G:\Phi(G)|=p^{\alpha}$  and $\alpha>2$, then  $\sigma_{ \mathfrak{n}}(G)<\infty$.

\end{cor}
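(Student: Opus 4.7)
The corollary is stated as an immediate consequence of Lemma \ref{gen}, and indeed it is merely its contrapositive. My plan is to apply Lemma \ref{gen} directly in contrapositive form.

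More precisely, the hypotheses of Lemma \ref{gen} are that $G$ is a finite $p$-group with $p>2$ and $Z(G) \leq \Phi(G)$; these coincide exactly with the hypotheses of the corollary. The conclusion of Lemma \ref{gen} is the implication $\sigma_n(G) = \infty \Rightarrow |G:\Phi(G)| = p^2$. Thus if $|G:\Phi(G)| = p^\alpha$ with $\alpha > 2$, then $|G:\Phi(G)| \neq p^2$, which forces the antecedent to fail, i.e.\ $\sigma_n(G) \neq \infty$. Since $\sigma_n(G)$ is either a finite positive integer or $\infty$, this yields $\sigma_n(G) < \infty$, as required.

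The only subtlety to verify is that the remaining hypothesis of the lemma (namely $Z(G) \leq \Phi(G)$) is genuinely inherited, which is immediate since it is assumed verbatim. There is no real obstacle here; the corollary is a straightforward logical restatement of Lemma \ref{gen}, and the entire proof consists of noticing that $\alpha > 2$ excludes the only case in which the lemma permits $\sigma_n(G) = \infty$.
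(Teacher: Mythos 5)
Your proof is correct and is exactly the argument the paper intends: the corollary is stated there as an immediate consequence of Lemma \ref{gen}, and your contrapositive reading (since $\alpha>2$ rules out $|G:\Phi(G)|=p^2$, the lemma forces $\sigma_n(G)<\infty$) is precisely that. Nothing further is needed.
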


	


In the following lemma, we investigate 2-groups $G$  such that $Z(G)\leq \Phi(G)$ and $\sigma_n(G)=\infty$. More precisely we show that $\sigma_n(G)=\infty$ if and only if there is $x\in G$ such that $x$ satisfies in Lemma \ref{nocover}(ii), $x\notin \varPhi(G)$ and $|\frac{G}{\varPhi(G)}|\leq 16$.

\begin{lemma}\label{gen2}
	Let $G$ be a   finite $2-$group such that $Z(G)\leq \Phi(G)$. Then  $\sigma_n(G)=\infty$ if and only if  there exists $x\in G\setminus \varPhi(G)$ such that $x$ does not belong to any proper normalizer in $G$ and $G=\langle x\rangle  \langle h\rangle\langle y\rangle \langle z\rangle$ for some $h, y, z\in G$. In this case, we have $|\frac{G}{\varPhi(G)}|\leq 16$.
\end{lemma}
\begin{proof}
{
From Lemma \ref{nocover},   $(\Leftarrow)$ is clear. Suppose that $\sigma_n(G)=\infty$. Then there is $x\in G$ with the following property:

\begin{center}
	if $x\in N_G(H)$ for some $H\leq G$, then $H\unlhd G$~~~~~~~~~~~(*).
\end{center}
Without loss of generality, we may assume that $\langle x\rangle$ is a maximal cyclic subgroup of $G$. If $x\in Z(G)$, then $G$ is Dedekind by (*) and since $Z(G)\leq \varPhi(G)$, we have $G\cong Q_8$ which is a contradiction by maximality of $\langle x\rangle$. Therefore $x\notin Z(G)$ and so $o(x)\geq 4$. Now we show that $\frac{G}{\langle x^2\rangle}$ is Dedekind or equivalently $\langle g\rangle\langle x^2\rangle$ is normal in $G$ for every $g\in G$.

Assume that $g\in G$. If $[g, x]=1$, then $g\in C_G(x)$ and so $\langle g\rangle \lhd G$ by (*). It follows that $\langle g\rangle\langle x^2\rangle \lhd G$, as wanted. Now suppose that $[g, x]\neq 1$. Then $\langle g\rangle \langle x\rangle$ is non-abelian. Since $G$ is a 2-group and $\langle x\rangle \lhd G$, we have $[g, x]\in \langle x\rangle \cap \varPhi(\langle g, x \rangle)$. If $\langle [g, x]\rangle=\langle x\rangle$, then $\langle g, x\rangle=\langle g\rangle$, a contradiction. Hence $\langle [g, x]\rangle$ is proper in $\langle x\rangle$ and so $[g, x]\in \langle x^2\rangle$. It follows that $x\in N_G(\langle g\rangle\langle x^2\rangle)$ and so  $\langle g, x^2\rangle\lhd G$ by (*), as wanted.

If $\frac{G}{\langle x^2\rangle}$ is abelian, then 
$x\langle x^2\rangle\not\in \varPhi(\frac{G}{\langle x^2\rangle})=\frac{\varPhi(G)}{\langle x^2\rangle}$ by maximality of $\langle x\rangle$ and so $x\notin \varPhi(G)$.
 
If  $\frac{G}{\langle x^2\rangle}$ is non-abelian, then there are normal subgroups $A$ and $Q$ of $G$ such that $\frac{G}{\langle x^2\rangle}=\frac{A}{\langle x^2\rangle}\frac{Q}{\langle x^2\rangle}$ where $\frac{A}{\langle x^2\rangle}$ is elementary abelian, $\frac{Q}{\langle x^2\rangle}\cong Q_8$ and $A\cap Q\leq \langle x^2\rangle$. It follows that $\varPhi(\frac{G}{\langle x^2\rangle})=\varPhi(\frac{Q}{\langle x^2\rangle})=Z(\frac{Q}{\langle x^2\rangle})$. Since $\frac{\langle x\rangle}{\langle x^2\rangle}$ is a maximal cyclic subgroup of $\frac{G}{\langle x^2\rangle}$ and $\varPhi(\frac{G}{\langle x^2\rangle})=\frac{\varPhi(G)}{\langle x^2\rangle}$, we have $x\notin \varPhi(G)$.

Note that all subgroups of $C_G(x)$ are normal in $G$ by (*). If $C_G(x)$ is not an abelian group, then  $C_G(x)=D\times Q_8$ where $D$ is an elementary abelian $2$-group. Since $o(x)\geq 4$, we have  $x\not\in Z(C_G(x))$, which is a contradiction.
Therefore $C_G(x)$ is an abelian group.
There exists a subgroup $K$ such that $C_G(x)=K\times \langle x\rangle$. Since $\frac{G}{C_G(x)}$ is isomorphic to a subgroup of $Aut(\langle x\rangle)$, there exists $y,z\in G$ such that 
$G=C_G(x)\langle y\rangle \langle z\rangle$.
If $K\leq \Phi(G)$, then $G=\langle x\rangle \langle y\rangle \langle z\rangle$ and so $|\frac{G}{\varPhi(G)}|\leq 8$, as wanted.

Now suppose that $K$ is not a subgroup of $\Phi(G)$. If $ \frac{G}{\langle x^2\rangle}$ is abelian, then  $[g,K]\leq \langle x\rangle \cap K=1$ for any $g\in G$, and so $K\leq Z(G)\leq \Phi(G)$, which is a contradiction. Therefore $\frac{G}{\langle x^2\rangle}$ is non-abelian. We have been    $\frac{G}{\langle x^2\rangle}=\frac{A}{\langle x^2\rangle} \frac{Q}{\langle x^2\rangle}$ where  $\frac{A}{\langle x^2\rangle}$ is elementary abelian and $\frac{Q}{\langle x^2\rangle}\cong Q_8$.

If there exists $  a\in A\cap K$ such that $a\langle x^2\rangle\neq \langle x^2\rangle$, then $a\langle x^2\rangle\in Z(\frac{G}{\langle x^2\rangle})$, so   $[a,g]\in \langle a\rangle \cap \langle x^2\rangle=1$ for all $g\in G$.  It follows that $a\in Z(G)$. Since any non-trivial element of $\frac{A}{\langle x^2\rangle}$ is a generator of $G$, we have $a\not\in \varPhi(G)$, which is a contradiction.
Hence $K\cap A=1$. 
We have $|\frac{K\langle x^2\rangle}{\langle x^2\rangle}|=|K|$, because $K\cap \langle x\rangle =1$. If $|K|>8$, then 
$$|\frac{AK}{\langle x^2\rangle}|=|\frac{A}{\langle x^2\rangle}||\frac{K\langle x^2\rangle}{\langle x^2\rangle}|>|\frac{A}{\langle x^2\rangle}|8=|\frac{G}{\langle x^2\rangle}|,$$
which is a contradiction.
So   $|K|\leq 8$. Let $h\in K\setminus\varPhi(G)$. If $o(h)=2$, then $h\in Z(G)$, which is a contradiction. So $o(h)\geq 4$.
Since $exp(\frac{G}{\langle x^2\rangle})=4$ and $K\cap \langle x^2\rangle=1$,  either $K\cong C_4\times C_2$ or 
$K\cong C_4$.
If $K\cong C_4\times C_2$, then 
there exists $u\in K\setminus\varPhi(K)$ such that $o(u)=2$. Then $u\in Z(G)$, and so $u\in \varPhi(G)$.
There exists $q$ in $G$ such that $[q,h]\neq 1$.
Clearly, $[q,h]\not\in \langle x^2\rangle$, so
$\frac{\langle q,h,x^2\rangle}{\langle x^2\rangle}\cong Q_8$. Then $\varPhi(\frac{G}{\langle x^2\rangle})=\varPhi(\frac{\langle q,h,x^2\rangle}{\langle x^2\rangle})=\langle h^2\langle x^2\rangle\rangle$.
So $u\in h^2\langle x^2\rangle$.  It follows that
 $u=h^2x^{2i}\in K$ for some integer $i$. Since $K\cap \langle x\rangle=1$, we deduce that $u=h^2\in \varPhi(K)$, which is a contradiction. 
So $K=\langle h\rangle\cong C_4$. 
We have    $C_G(x)=\langle x\rangle\times \langle h\rangle$, and so
$$G=C_G(x)\langle y\rangle \langle z\rangle=\langle x\rangle  \langle h\rangle\langle y\rangle \langle z\rangle.$$

}
\end{proof}

Note that there exists a $2$-group $$G=AllSmallGroups(128,IsAbelian,false)[1887]$$  in the software $GAP$, such that $\sigma_n(G)=\infty$ and $|\frac{G}{\varPhi(G)}|=2^4$.
The following three proposition shows that any proper normlizer in finite $p$-group is a subgroup of a proper normalizer of   index $p$.

\begin{prop}
 \label{index}
Let $G$ be a non-abelian $p$-group and $K$ be a  normal subgroup of $G$. Suppose that $L\leq T$ are subgroups of $K$ such that $L\lhd G$, $\frac{K}{L}\cong C_p\times C_p$ and $T$ is a non-normal subgroup of $G$ such that $|K:T|=p$. Then $|G:N_G(T)|=p$.
   
\end{prop}

\begin{proof}
 By hypothesis, $T$ is a maximal subgroup of $K$ and since $K\unlhd G$, every conjugate of $T$ in $G$ is contained in $K$. Since $\frac{K}{L}\cong C_p\times C_p$, the number of maximal subgroups of $\frac{K}{L}$ is $p+1$. Therefore the number of conjugates of $T$ in $G$ is at most $p+1$ and so $|G:N_G(T)|=p$, as desired. 
\end{proof}
  
\begin{lemma}
	Let $G$ be a finite $p$-group and $H$ a normal subgroup of $G$. Then for any divisor $p^k$ of $|H|$, $H$ has a   subgroup  $K$, such that  $K\lhd G$.
	\end{lemma}  
\begin{proof}
We proceed by induction on $|G|$. Let $N\leq H$, be a normal minimal subgroup of $G$.
If $k=1$, then $K=N$. So suppose that $k>1$. By induction hypothesis, $\frac{H}{N}$ has a 
subgroup $\frac{K}{N}$ such that $\frac{K}{N}\lhd \frac{G}{N}$.	
\end{proof}	  

\begin{lemma}\label{max2}
Let $G$ be a  finite non-abelian $p-$group, and let $H$ be a non-normal subgroup of $G$. Then there exists a subgroup $L$ of $G$ such that
$N_G(H)\leq N_G(L)$ and $N_G(L)$ is a maximal subgroup of $G$.

\end{lemma}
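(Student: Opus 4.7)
I plan to prove this by induction on the index $[G:N_G(H)]=p^k$. The base case $k=1$ is immediate: $N_G(H)$ is itself a maximal subgroup, so $L:=H$ works.

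For $k\ge 2$ I split on whether $N_G(H)$ is normal in $G$. If $N_G(H)\not\unlhd G$, then by the normalizer condition in a $p$-group, $N_G(H)<N_G(N_G(H))$, so $H_1:=N_G(H)$ is a non-normal subgroup of $G$ with $[G:N_G(H_1)]<p^k$; the inductive hypothesis applied to $H_1$ supplies an $L$ with $N_G(L)$ maximal, and $N_G(H)\le N_G(H_1)\le N_G(L)$ finishes this case. If $N_G(H)\unlhd G$, I perform a secondary induction on $|G|$: taking a minimal normal subgroup $N\le Z(G)$ of order $p$, the cases $N\le H$ and ($N\not\le H$ together with $HN\not\unlhd G$) both reduce via $G/N$ to a non-normal subgroup of a strictly smaller group, using $N_G(H)\le N_G(HN)$ when $N$ is central.

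The remaining subcase is $HN\unlhd G$. Here $HN=H\times N$ is normal in $G$ with $H$ a non-normal complement of $N$, and I apply Lemma~\ref{index} with $K:=HN$. If $H$ is cyclic, then $\Phi(HN)=\Phi(H)$ gives $HN/\Phi(H)\cong C_p\times C_p$, and $T=H$ satisfies the hypotheses of Lemma~\ref{index}, forcing $|G:N_G(H)|=p$, which contradicts $k\ge 2$. So $H$ must be non-cyclic, and I construct a $G$-invariant subgroup $L_0$ with $\Phi(H)\le L_0\le HN$ and $HN/L_0\cong C_p\times C_p$ satisfying $[N_G(H),HN]\cdot\Phi(H)\subseteq L_0$ but $[G,HN]\cdot\Phi(H)\not\subseteq L_0$. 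These two containments make $N_G(H)$ act trivially on $HN/L_0$ while $G$ acts non-trivially, so Lemma~\ref{index} supplies a non-$G$-invariant line $T/L_0$ that is stabilized by $N_G(H)$; this gives $N_G(H)\le N_G(T)$ and $|G:N_G(T)|=p$, as required.

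The central technical point is constructing $L_0$. The strict containment $[N_G(H),HN]\subsetneq[G,HN]$ modulo $\Phi(H)$ is guaranteed by the non-normality of $H$: for any $g\in G\setminus N_G(H)$ and suitable $h\in H$, the commutator $[g,h]\in HN=H\times N$ has a non-trivial $N$-component, whereas every element of $[N_G(H),HN]$ lies entirely inside $H$. Given this strict containment, the existence of a $G$-invariant codimension-$2$ subspace of $V:=HN/\Phi(H)$ that contains $[N_G(H),V]$ but not $[G,V]$ follows from standard facts on $p$-group actions on $\mathbb{F}_p$-vector spaces, in particular the existence of a $G$-invariant composition series. The principal difficulty is coordinating all three conditions on $L_0$—codimension $2$, $G$-invariance, and the exact commutator sandwich—simultaneously; this is where the non-cyclic hypothesis on $H$ (ensuring $\dim V\ge 3$) is essential.
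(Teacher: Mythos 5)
Your opening reductions (the base case, the normalizer‑condition step when $N_G(H)$ is not normal, and the passage to $G/N$ when $N\le H$ or $HN\ntriangleleft G$) are fine, but the final subcase contains a genuine gap: the subgroup $L_0$ you need does not always exist, and the claim that it "follows from standard facts on $p$-group actions" is false. Concretely, let $G$ be extraspecial of order $p^5$: the central product of $E_1=\langle a_1,b_1\rangle$ and $E_2=\langle a_2,b_2\rangle$, each nonabelian of order $p^3$ with $[a_i,b_i]=z$ and common centre $\langle z\rangle$ (for $p=2$ take $D_8\ast D_8$), and let $H=\langle b_1,b_2\rangle\cong C_p\times C_p$. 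The unique minimal normal subgroup is $N=Z(G)=\langle z\rangle$, $H\cap N=1$, $HN=H\times N\unlhd G$, and $N_G(H)=C_G(H)=HN$ is normal of index $p^2$, with $H$ non-cyclic, so you are exactly in your residual subcase. Your $L_0$ must satisfy $\Phi(H)=1\le L_0\le HN$, $HN/L_0\cong C_p\times C_p$ and $L_0\unlhd G$; hence $|L_0|=p$, and a normal subgroup of order $p$ is central, forcing $L_0=\langle z\rangle=N$. But $[G,HN]=N\subseteq L_0$, contradicting your requirement $[G,HN]\Phi(H)\not\subseteq L_0$. So the strict containment $[N_G(H),HN]\subsetneq[G,HN]$ modulo $\Phi(H)$ (here $1\subsetneq N$) does not produce a $G$-invariant subgroup of index $p^2$ in $HN$ separating the two, because the only such invariant subgroup already contains $[G,HN]$. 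The lemma is nevertheless true in this example, via $L=\langle b_1\rangle$ with $N_G(L)=C_G(b_1)$ maximal and containing $N_G(H)$ — which shows the defect is in the method, not the statement.

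Structurally, the problem is your insistence on applying Lemma \ref{index} with $K=HN$, i.e.\ on finding the maximal normalizer among subgroups of index $p$ in $HN$; in the example every such candidate line lies over $L_0=N$ and the separation you want is impossible. The paper's proof avoids this by abandoning $H$ itself: it chooses $T$ of minimal order among subgroups that are normal in $N_G(H)$ but not normal in $G$ (in the example $T=\langle b_1\rangle$, of order $p$), passes to $G/N$ if $TN/N$ is still non-normal, and otherwise applies Lemma \ref{index} to $K=TN$ with $L=S$ an $N_G(T)$-invariant maximal subgroup of $T$, which is normal in $G$ by the minimality of $T$; since $T\unlhd N_G(H)$ gives $N_G(H)\le N_G(T)$, this yields the maximal normalizer. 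Your argument could likely be repaired by making the analogous minimal choice and replacing $HN$ by $TN$, but as written the existence claim for $L_0$ — the "central technical point" of your last case — is unjustified and, as the extraspecial example shows, false.
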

\begin{proof}
{ We proceed by induction on $|G|$. Base of induction is trivial whenever $|G|=p^3$. So suppose that $|G|>p^3$. 
Let $$\mathcal{A}=\{K\lhd N_G(H):~~K\ntriangleleft G\}$$
 Since $H\in \mathcal{A}$, there is a subgroup $T\in \mathcal{A}$ of minimum size.

 Let $N$ be a minimal normal  subgroup of $G$. First suppose  that $\frac{TN}{N}$ is not a normal subgroup of 
$\frac{G}{N}$. By induction hypothesis, there exists a subgroup $\frac{L}{N}$ of $\frac{G}{N}$ such that
$N_{\frac{G}{N}}(\frac{TN}{N})\leq N_{\frac{G}{N}}(\frac{L}{N})$ and $N_{\frac{G}{N}}(\frac{L}{N})$ is a maximal subgroup of $\frac{G}{N}$.
 Then $N_G(L)$ is a maximal subgroup  of $G$ and $N_G(H)\leq N_G(T)\leq N_G(TN)\leq N_G(L)$. So suppose that $TN\lhd G$. Since $T\lhd N_G(T)$, $T$ has a maximal subgroup 
 $S$ which is normal in $N_G(T)$.  
 Since $|S|<|T|$, we have $S\lhd G$. It follows from Lemma \ref{index} that
 $|G:N_G(T)|=p$.  
 
}
\end{proof}
\begin{lemma}\label{cen}
Let $G$ be a finite $p$-group, and let $b\in G\setminus Z(G)$ such that 
$[b,G]\subseteq N$ where $N$ is a normal minimal subgroup of $G$.
Then $C_G(b)$ is a maximal subgroup of $G$.

\end{lemma}
\begin{proof}
 Consider the homomorphism $\sigma: G\longrightarrow Aut(\langle b\rangle N)$ by $\sigma(w)(r)=r^w$ for each $w\in G$ and $r\in \langle b\rangle N$. We  have $r^w=rc_r$ for some $c_r\in N$ and so $|Im \sigma|=p$, because $N=p$. Therefore $|\frac{G}{C_G(b)}|=p$ which implies that $C_G(b)$ is a maximal subgroup of $G$.
\end{proof}
In the following Theorem we show that a metacyclic group with derived subgroup of prime order does not have any normalizer covering.
\begin{thm}\label{meta}
    Let $K$ be a finite metacylic $p$-group such that $|K'|=p$.
    Then $\sigma_n(K)=\infty$.
\end{thm}
\begin{proof}
    Let $h$ be a generator of $K$ such that   $\langle h\rangle\lhd K$.
    Since $|K'|=p$, we have $Z(K)=\varPhi(K)$.
    If all generators of $K$ are normalal, then $K$ is abelian or quaternion, so $\sigma_n(K)=\infty$. 
So suppose that there exists    $b\in K\setminus \varPhi(K)$ such that
 $\langle b\rangle$ is not normal in $K$. Then $K=\langle h\rangle\langle b\rangle$. 
 		Since  $\langle h\rangle \lhd K$, then 
 		$K'\leq \langle h\rangle$.  Therefore $\langle b\rangle\cap \langle h\rangle=1$.   If $o(h)=p$, then $\langle h\rangle=K'\leq Z(K)$, which is a contradiction. So $o(h)>p$, and so $K'\leq \mho_1(K)$, and hence $\mho_1(K)=\varPhi(K)=Z(K)$. 
 		Let $H\leq K$, such that $h\in N_K(H)$. 
   We claim that $H\lhd K$. We proceed by induction on $|K|$.
If $H\cap \langle h\rangle\neq 1$, then $K'\leq H$, so $H\lhd K$.
So suppose that $H\cap \langle h\rangle=1$. If $[H,h]\neq 1$, then $K'\leq H$, hence $H\lhd K$. It follows that  $[H,h]=1$.

		Since $$\frac{H}{H\cap \langle h\rangle}\cong \frac{H\langle h\rangle}{\langle h\rangle}\leq \frac{K}{\langle h\rangle}$$
 		and $\frac{K}{\langle h\rangle}=\langle b\langle h\rangle\rangle$ is cyclic, we have $\frac{H}{H\cap \langle h\rangle}$ is a cyclic group, so  $H=(H\cap \langle h\rangle )\langle a_H\rangle=\langle a_H\rangle$ where $a_H\in C_K(h)=\langle h\rangle \langle b^p\rangle$. If $a_H\in Z(K)$, then $H\lhd K$.
   So $a_H=h^iz$ where $p\nmid i$ and $z\in \mho_1(K)$. If $o(a_H)=p$,
   then $h^{ip}z^p=1$. So $h^{ip}=z^{-p}\in \mho_2(K)$, so $h^p\in \mho_2(K)$,  which is a contradiction. 
   If $a_H^p\neq 1$, then  by induction hypothesis, $\frac{H}{\langle a_H^p\rangle}\lhd \frac{K}{\langle a_H^p\rangle}$.
   So $H\lhd K$, as claimed. 
 		From Lemma \ref{nocover}, $\sigma_n(K)=\infty$.
   
\end{proof}
We need   the following.

The following theorem prove that there is no any $p$-group with $\sigma_{ \mathfrak{n}}(G)=4$
 \begin{thm}\label{4}
     Let $G$ be a finite $2$-group such that $\sigma_n(G)\leq 4$. Then $\sigma_n(G)=3$.
 \end{thm}
 \begin{proof}

 Suppose, for a contradiction, that $\sigma_{ \mathfrak{n}}(G)=4$. Let  $K_1,K_2,K_3,K_4$ be distinct nonnormal subgroups of  $G$  such that $G$ is a union of their normalizers.
 By Lemma \ref{max2}, we may assume that all $R_i:=N_G(K_i)$ are maximal subgroups of $G$.
 If there are $K_{i_1}, K_{i_2}, K_{i_3}\in \{K_1,K_2,K_3,K_4\}$ such that $|G:\bigcap_{i=1}^3 R_{j_i}|=4$, then
 $$|R_{i_1}\cup R_{i_2}\cup R_{i_3}|= 3\cdot 2^{n-1}-3\cdot 2^{n-2}+ 2^{n-2}=2^n,$$  
so $G=R_{i_1}\cup R_{i_2}\cup R_{i_3}$, which is a contradiction. Therefore
  $|G:\bigcap_{i=1}^3 R_{j_i}|= 8$ for all  combinations of three subgroups  from the set $\{K_1,K_2,K_3,K_4\}$.
  Since $|R_1\cup R_2\cup R_3\cup R_4|$  is equal to
  $$\sum |R_i|-\sum_{1\leq i<j\leq 4}|R_i\cap R_j|+\sum_{1\leq i< j<s\leq 4}|R_i\cap R_j\cap R_s|-|R_1\cap R_2\cap R_3\cap R_4|,$$
  we have
  \begin{eqnarray*}
  2^n=|G|&=&|R_1\cup R_2\cup R_3\cup R_4|
  \\&<&
  4\cdot 2^{n-1}-(^4_2)\cdot2^{n-2}+(^4_3)\cdot2^{n-3}\\&=&2^{n+1}-6\cdot2^{n-2}+2^{n-1}\\&=&2^{n-1}+2^{n-1}\\&=&2^n,
  \end{eqnarray*}

  which is a contradiction.
So $\sigma_n(G)=3.$
  \end{proof}



\section{ An application }
In this section we see that by imposing  more conditions on the elements  of a normalizer covering of a group $G$, we get more information about structure of $G$.
We start this section with the following definition.
\begin{defn}
Let $H$ be a non-normal nilpotent subgroup of $G$. We say that $N_G(H)$ is a maximal n-normalizer if there is no non-normal nilpotent subgroup $L$ of $G$ such that $N_G(H)<N_G(L)$. 
\end{defn}
 
\begin{lemma}\label{hal}
Let $G$ be a finite group and $H\leq G$ such that $N_G(H)$ is a nilpotent maximal n-normalizer. If $C=Core_G(N_G(H))$, then 
  $\frac{N_G(H)}{C}$ is a Hall subgroup of $\frac{G}{C}$.

\end{lemma}
\begin{proof}
Without loss of generality, assume that $C=1$. Then for every prime divisor $p$ of $|N_G(H)|$,  the Sylow $p$-subgroup $P$ of $N_G(H)$ is not normal in $G$. We claim that  $P$ is a Sylow $p$-subgroup of $G$.

First, note that there  is a Sylow $p$-subgroup $Q$ of $G$ such that $P\subseteq Q$. Since $N_G(H)$ is nilpotent, we have $N_G(H)\leq N_G(P)$ and so by maximality,  $N_G(H)=N_G(P)$.
 Now, if $P\neq Q$, then there exists $x\in N_Q(P)\setminus P$ and so $x\in N_G(H)\setminus P$. On the other hand $x$ is a $p$-element and $N_G(H)$ is nilpotent, we have $x\in P$ which is a contradiction. Hence $P=Q$, as claimed. Hence we show that every Sylow subgroup of $N_G(H)$ is a Sylow subgroup of $G$ and this completes the proof.
\end{proof}

\begin{cor}\label{ee}
Let $G$ be a non-nilpotent group with a covering $\Gamma$ whose members are maximal n-normalizers. Also, suppose that every member of $\Gamma$ is either in $Fit(G)$ or a $q$-subgroup for a fixed prime number $q$. Then $\frac{G}{Core_G(S)}=\frac{Fit(G)}{Core_G(S)}\rtimes \frac{S}{Core_G(S)}$ is Frobenius where $S\in Syl_q(G)\cap \Gamma$. Moreover $|\Gamma|\geq1+|Syl_q(G)|$ or $\sigma_{ \mathfrak{n}}(Fit(G))+|Syl_q(G)|\}$  such that $\sigma_{ \mathfrak{n}}(Fit(G))\in\{p+1, 5\}$ where $p$ is a prime divisor of $|Fit(G)|$.

\end{cor}

\begin{proof} 
Assume that   $\Gamma=\{N_G(H_1), N_G(H_2),\cdots, N_G(H_m)\}$ for some subgroups $H_i$ of $G$. Since $G$ is not a nilpotent group and $G=\bigcup_{i=1}^mN_G(H_i)$, there exists $j$ such that  $N_G(H_j)\nless Fit(G)$. By hypothesis, $N_G(H_j)$ is a $q$-subgroup and by Lemma \ref{hal}, $\frac{N_G(H_j)}{C}$ is a Sylow $q$-subgroup of  $\frac{G}{C}$ where  $C=Core_G(N_G(H_j))$. Therefore  $N_G(H_j)\in Syl_q(G)$. Set $S:=N_G(H_j)$. Hence  $S$ is not normal in $G$ and by maximality of  $N_G(H_j)$, we have  $N_G(S)=S$.


By hypothesis, we have  $\frac{G}{C}=(\bigcup_{g\in G}\frac{S^g}{C})\cup \frac{Fit(G)}{C}$ . Since  $gcd(|\frac{S}{C}|, |\frac{Fit(G)}{C}|)=1$, we see that  $\frac{G}{C}$ has a Hall covering (for the definition of Hall covering, see \cite{hal22}). Also  since $\frac{Fit(G)}{C}$ is a normal Hall subgroup of $\frac{G}{C}$, By Zassenhaus's Theorem   $\frac{G}{C}=\frac{Fit(G)}{C}\rtimes \frac{S}{C}$. By hypothesis, $\frac{G}{C}$ does not have an element of order $pq$ for some rime divisor $p$ of $|Fit(G)|$. Therefore
the prime graph of $\frac{G}{C}$ is not connected. It follows  from Theorem B of \cite{hal22} that $\frac{G}{C}$  is a Frobenius group or $2$-Frobenius group. Now, we claim that  $\frac{G}{C}$ is Frobenius.

Suppose, for a contradiction, that  $\frac{G}{C}$ is a 2-Frobenius group.  Then $G$ has a normal series  $C<N<K<G$ such that  $\frac{G}{N}$ and  $\frac{K}{C}$ are Frobenius. Therefore  $\frac{G}{N}=Fit(\frac{G}{N})\rtimes \frac{SN}{N}$ and $\frac{K}{C}=\frac{Fit(G)}{C}\rtimes \frac{S\cap K}{C}$. Now if  $S\cap S^g\neq C$ for some $g\in G\setminus S$, then there is  $xC\in\frac{S\cap S^g}{C}$ of order $q$.  On the other hand  we know  $SN\cap S^gN=N$ and so  $S\cap S^g<K$. Since $\frac{S\cap K}{C}$ is cyclic or generalized Quaternion,   $\langle xC\rangle$ is characteristic subgroup of $\frac{S\cap K}{C}$ and $\frac{S^g\cap K}{C}$. Therefor  $\langle xC\rangle$ is a normal subgroup of $\frac{S}{C}$ and  $\frac{S^g}{C}$ which implies that $S, S^g\leq N_G(\langle x\rangle C)$. Since $S$ is a maximal n-normalizer in $G$ and  $\langle x\rangle C$ is not normal in $G$, we have a contradiction. It follows that  $S\cap S^g=C$ for every $g\in G$. Now since  $\frac{G}{C}=\frac{Fit(G)}{C}\rtimes \frac{S}{C}$, we see that  $\frac{G}{C}$ is Frobenius which is impossible. Hence  $\frac{G}{C}$ is not 2-Frobenius and so $\frac{G}{C}=\frac{Fit(G)}{C}\rtimes \frac{S}{C}$ is Frobenius. It follows that all conjugates of $S$ in $G$ are in $\Gamma$.

Now, suppose that $N_G(L_1), N_G(L_2), \cdots, N_G(L_r)$ are all members of $\Gamma$ which are contained in $Fit(G)$. Then  we show that  $\bigcup_{i=1}^rN_G(L_i)= Fit(G)$.

Suppose, for a contradiction,  that $\bigcup_{i=1}^rN_G(L_i)\neq Fit(G)$. Then $C$ is not  a subset of $\bigcup_{i=1}^rN_G(L_i)$. Since $L_i$ is a non-normal nilpotent subgroup of $G$ for each $i$,  there exists $R_i\in Syl_{r_i}(L_i)$  such that $R_i$ is not a normal subgroup of $G$ for some prime divisor $r_i$ of $|L_i|$. Since $R_i$ is a characteristic subgroup of $L_i$ and $L_i\lhd N_G(L_i)$, we have $R_i\lhd N_G(L_i)$ for each $i$. Hence $N_G(L_i)\leq N_G(R_i)$. Since $N_G(L_i)$ is maximal n-normalizer and $N_G(R_i)\neq G$, we have $N_G(R_i)=N_G(L_i)$. If $R_i$ is not a Sylow $q$-subgroup of $L_i$, then $C\leq C_G(R_i)$ since  $C$ and $R_i\leq Fit(G)$. Therefore $C\leq N_G(R_i)=N_G(L_i)$, which is a contradiction. Hence   $R_i\in Syl_q(L_i)$ for all $i=1,2,...,r$. Let $Y$ be a Hall complement for $C$ in $Fit(G)$. Since $Y\leq N_G(R_i)$, we have $Y\leq N_G(L_i)$  for all $i=1,2,...,r$.  
Let $x\in C\setminus \bigcup_{i=1}^rN_G(L_i)$ and  $y\in Y\setminus \{1\}$.
Since  $xy\not\in C$,  there exists $L_i$ such that $xy\in N_G(L_i)$ where $1\leq i\leq r$.
Since $y\in Y\leq  N_G(L_i)$, we have $x\in N_G(L_i)$, which is a contradiction. Hence $\bigcup_{i=1}^rN_G(L_i)= Fit(G)$. If $r=1$, then $|\Gamma|=1+ |Syl_q(G)|$. Otherewise $\{N_{Fit(G)}(L_1),...,N_{Fit(G)}(L_r)\}$ is a normalizer covering for $Fit(G)$ and so $|\Gamma|= r+ |Syl_q(G)|\geq \sigma_{ \mathfrak{n}}(Fit(G))+|Syl_q(G)|$. The proof is complete by Corollary \ref{nilll}.


\end{proof}

We conclude the paper with the following questions which can be interesting:
 
 \begin{que}\label{ques}
 
 (1)-For every positive integer $m>2$, is there a group $G$ with $\sigma_{ \mathfrak{n}}(G)=m$?
 
 (2)-Determine the groups $G$ with $\sigma_{ \mathfrak{n}}(G)=\sigma(G)$?
 
 (3) Let $\Gamma=\{N_G(H_i):~1\leq i\leq m\}$ be a  normalizer covering of $G$ such that every $N_G(H_i)$ is nilotent and there is no a nilpotent subgroup $K_i$ such that $N_G(H_i)<N_G(K_i)<G$ for each $i$, then is $G$ solvable?
 
 (4) Find $\sigma_{ \mathfrak{n}}(G)$ when $G$ is a solvable group wih a normalizer covering.

 \end{que}
 In \cite{tom}, Tomkinson proved if $G$ is a non-cyclic solvable group, then $\sigma(G)=p^k+1$ for some prime $p$ and positive integer $k$. But we have seen $\sigma_{ \mathfrak{n}}(S_4)=7$ and so the situation of $\sigma_{ \mathfrak{n}}(G)$ is different from $\sigma(G)$ on solvable groups $G$. Corollary \ref{ee} gives a partial positive answer to Question \ref{ques}(3).


 \end{document}